\date{}
\theoremstyle{plain}
      \newtheorem{theorem}{Theorem}[section]
      \newtheorem{lemma}[theorem]{Lemma}
            \newtheorem{claim}[theorem]{Claim}
      \newtheorem{observation}[theorem]{Observation}
      \newtheorem{corollary}[theorem]{Corollary}
      \newtheorem{conjecture}[theorem]{Conjecture}
\theoremstyle{definition}
\theoremstyle{remark}
\title{On short edges in complete topological graphs}
\author{Andrew Suk\thanks{Department of Mathematics, University of California at San Diego, La Jolla, CA, 92093 USA. Supported by NSF CAREER award DMS-1800746 and  NSF award DMS-1952786. Email: {\tt asuk@ucsd.edu}.}}
\begin{document}

\maketitle

\begin{abstract} 
Let $h(n)$ be the minimum integer such that every complete $n$-vertex simple topological graph contains an edge that crosses at most $h(n)$ other edges.  In 2009, Kyn\v{c}l and Valtr showed that $h(n) = O(n^2/\log^{1/4} n)$, and in the other direction, gave constructions showing that $h(n) = \Omega(n^{3/2})$.  In this paper, we prove that $h(n) = O(n^{7/4})$.  Along the way, we establish a new variant of Chazelle and Welzl's matching theorem for set systems with bounded VC-dimension, which we believe to be of independent interest.

\end{abstract}

\section{Introduction}

A \emph{topological graph} is a graph drawn in the plane such that its vertices are represented by points
 and its edges are represented by non-self-intersecting arcs connecting the corresponding points. The edges are
allowed to intersect, but they may not intersect vertices except for
their endpoints.  Furthermore, no two edges are tangent, i.e., if two edges share an interior point in common, then they must properly cross at that point in common.  Two edges of a topological graph \emph{cross} if their interiors share a point, and are \emph{disjoint} if they have no points in common.  A topological graph is \emph{simple} if every pair of its edges intersect at most once,
either at a common endpoint or at a proper crossing point. If the edges are drawn
as straight-line segments, then the graph is said to be \emph{geometric}. 
  If the vertices
of a geometric graph are in convex position, then it is called \emph{convex}.  Finally, given a complete $n$-vertex topological graph $G$, we say that an edge in $G$ is \emph{short} if at most $o(n^2)$ other edges cross it.

It is easy to see that there are many complete topological graphs with no short edges.  Moreover,  Pach and T\'oth \cite{pachtoth} showed that, for every $n \geq 2$, there are complete $n$-vertex topological graphs with the property that every pair of edges has exactly one or two points in common, and hence, has no short edges. However, the situation changes for \emph{simple} topological graphs. Let $h(n)$ be the minimum integer such that every complete $n$-vertex simple topological graph contains an edge that crosses at most $h(n)$ other edges.  In 2003, Brass, Moser, and Pach \cite{brass} conjectured that one can always find a short edge in a complete simple toplogical graph, that is, $h(n)  = o(n^2)$.  The best known lower bound at the time was $h(n)  =  \Omega(n)$, due to a result of Harborth and Th\"urmann \cite{hat} from 1994. Their conjecture was verified by Kyn\v{c}l and Valtr \cite{kyncl} in 2009, who showed that  $$\Omega\left(n^{3/2}\right)\leq h(n) \leq O\left(\frac{n^2}{\log^{1/4}n}\right).$$ See also \cite{zeng} for a slightly better upper bound.  Our main result is the first polynomial improvement in the upper bound for $h(n)$.

\begin{theorem}\label{main}
For $n\geq 2$, we have $h(n) =O\left(n^{7/4}\right)$.
\end{theorem}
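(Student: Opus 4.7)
The plan is to produce a perfect matching $M$ on $V(G)$---a collection of $\lfloor n/2 \rfloor$ pairwise vertex-disjoint edges of $G$ (which exist because $G$ is complete)---with the property that every edge $e \in E(G)$ is crossed by at most $O(n^{3/4})$ edges of $M$. Granted this, a standard double counting gives
\[\sum_{m \in M}|\{e \in E(G) : e \text{ crosses } m\}| = \sum_{e \in E(G)}|\{m \in M : m \text{ crosses } e\}| \le \binom{n}{2}\cdot O(n^{3/4}) = O(n^{11/4}),\]
so by averaging some $m \in M$ is crossed by at most $O(n^{7/4})$ edges of $G$, yielding Theorem~\ref{main}.

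To build such a matching, I would associate to each edge $e = uv \in E(G)$ a subset $A_e \subseteq V(G)$ chosen so that whenever an edge $xy \in E(G)$ crosses $e$, we have $|A_e \cap \{x,y\}| = 1$. A natural candidate is to close the arc $e$ into a Jordan curve using an auxiliary arc determined, say, by the rotations at $u$ and $v$ or by a convenient spanning subgraph of the drawing, and to let $A_e$ be the set of vertices on one side. Under any such definition, the number of edges of a matching $M$ crossing $e$ in the drawing is bounded above by the number of matching edges ``cut'' by $A_e$ (one endpoint inside, one outside). Thus it suffices to find a matching $M$ that cuts every set $A_e$ in at most $O(n^{3/4})$ edges; this is exactly what the paper's variant of the Chazelle--Welzl matching theorem is designed to deliver, namely a crossing bound $O(n^{1-1/d})$ for set systems of VC-dimension at most $d$, which becomes $O(n^{3/4})$ when $d = 4$.

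The crux of the argument, and the main obstacle, is to verify that the set system $\mathcal{F} = \{A_e : e \in E(G)\}$ on ground set $V(G)$ really does have bounded VC-dimension---concretely, that no $5$-element subset of $V(G)$ can be shattered by $\mathcal{F}$. This should follow from the simplicity hypothesis (any two edges of $G$ meet at most once) combined with Jordan-curve constraints arising from the planarity of the drawing: given any five vertices, at least one of the $2^{5}$ possible ``side patterns'' across the curves closing edges of $G$ must be topologically forbidden. The reason a \emph{variant} of the Chazelle--Welzl theorem is needed---rather than its classical form---is presumably that $\mathcal{F}$ satisfies only a weaker quantitative hypothesis (for example, a bounded primal shatter function, or a VC-dimension bound that holds only on suitably restricted subsystems, or a structural constraint on which matchings are admissible in the topological graph setting), and the matching theorem has to be reproved under that weaker hypothesis. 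Once the VC-dimension estimate and the matching variant are in place, the averaging argument above closes the proof.
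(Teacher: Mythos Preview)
Your plan has a genuine gap at its very first step: there is no way to define $A_e$ so that ``$xy$ crosses $e$'' implies $|A_e\cap\{x,y\}|=1$. If you close $e=uv$ into a Jordan curve $\gamma_e$ by adjoining an auxiliary arc $\alpha$, then whether $x$ and $y$ lie on opposite sides of $\gamma_e$ is governed by the \emph{parity} of $|xy\cap\gamma_e|$, not by $|xy\cap e|$. Since $xy$ may cross $\alpha$ any number of times, a single crossing with $e$ tells you nothing. Even the most natural choice---taking $\alpha$ to consist of two edges of $G$ through a fixed apex $v_0$, so that $\gamma_e$ is the triangle $G[v_0,v_i,v_j]$---does not work: an edge $v_kv_\ell$ can cross $v_iv_j$ and also cross one of $v_0v_i,v_0v_j$, putting both endpoints on the same side of the triangle. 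Your VC-dimension claim (no $5$-set is shattered) is also unsupported; nothing in the simplicity hypothesis rules out shattering for a family of sets built from ad hoc closing arcs.

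The paper's proof in fact takes the triangle-through-$v_0$ route, but precisely because the crossing--stabbing correspondence fails, the argument cannot stop at Chazelle--Welzl plus double counting. The triangle system has dual shatter exponent $d=2$ (so the stabbing number is $O(n^{1/2})$, not $O(n^{3/4})$), and the new matching lemma adds two further constraints: each matched pair $\{v_i,v_j\}$ satisfies $|i-j|\le n^{3/4}$, and each matched pair is stabbed by at most $O(n^{7/4})$ sets. After passing to a submatching $M_2$ with small in-degree in an auxiliary digraph, one selects the pair $\{v_x,v_y\}\in M_2$ whose triangle contains the fewest matched pairs, and then classifies the edges crossing $v_xv_y$ into five types $E_0,\dots,E_4$. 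The first four are bounded directly by the three properties of the matching; the last type $E_4$---edges $v_iv_j$ that cross $v_xv_y$ yet whose triangle $T_{i,j}$ does \emph{not} stab $\{v_x,v_y\}$---is exactly the failure of your correspondence, and bounding it is the heart of the proof, done via a second auxiliary digraph and a minimality contradiction. Your outline skips over all of this by assuming a clean correspondence that does not exist.
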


Our paper is organized as follows.  In the next section, we recall the notion of VC-dimension and prove a result on matchings with low stabbing number in set systems with bounded VC-dimension.  In Section \ref{p1}, we use this matching result to prove Theorem \ref{main}.  We conclude the paper with several remarks and open problems. For sake of clarity, we omit floor and ceiling signs whenever they are not crucial.

\section{VC-dimension and the dual shatter function}

Given a set-system $\mathcal{F}$ with a ground set $V$, the \emph{Vapnik-Chervonenkis dimension} (in short, VC-dimension) of $\mathcal{F}$ is the largest integer
$d$ for which there exists a $d$-element set $X\subset V$ such that for every subset $Y \subset X$, one can find a
member $A \in \mathcal{F}$ with $A\cap X = Y$.  The VC-dimension, introduced by Vapnik and Chervonenkis \cite{vc}, is one of the most useful combinatorial
parameters that measures the complexity of a set system, with a wide range of applications in statistics, logic, learning theory, graph drawing, and computational geometry. Throughout this paper, all set systems will be \emph{multisets}, that is, $\mathcal{F}$ may contain several copies of the same set $A \in \mathcal{F}$.  The cardinality of $\mathcal{F}$, denoted by $|\mathcal{F}|$, is counted with these multiplicities.  

Here, it will be more convenient to work with the dual VC-dimension and the dual shatter function of $\mathcal{F}$.  The \emph{dual shatter function} of $\mathcal{F}$, denoted by $\pi_{\mathcal{F}}^{\ast}(m)$, 
is the maximum number of nonempty cells of the Venn diagram of $m$ sets in $\mathcal{F}$.  In other words, $\pi_{\mathcal{F}}^{\ast}(m)$ is the maximum number of equivalence classes on $V$ defined by an $m$-element subfamily $\mathcal{S}\subset \mathcal{F}$, where two points $x, y \in V$ are equivalent with respect to $\mathcal{S}$ if $x$ belongs to the same sets of $\mathcal{S}$ as $y$ does. 
 The \emph{dual VC-dimension} of $\mathcal{F}$ is the maximum possible number of sets in $\mathcal{F}$ with a complete Venn diagram, that is, the maximum $d^{\ast}$ such that $\pi^{\ast}_{\mathcal{F}}(d^{\ast}) = 2^{d^{\ast}}$.  The classical theorem of Sauer \cite{sauer} and Shelah~\cite{shelah} implies that if $\mathcal{F}$ has dual VC-dimension $d^{\ast}$, then $\pi^{\ast}_{\mathcal{F}}(m) \leq \sum_{i = 0}^{d^{\ast}}\binom{m}{i}$.  Moreover, it is know that $d^{\ast} \leq 2^d$ if $\mathcal{F}$ has VC-dimension $d$ (see \cite{welzl}).

 Let $\mathcal{F}$ be a set system with ground set $V$. Given two points $u,v \in V$, we say that a set $A\in \mathcal{F}$ \emph{stabs} the pair  $\{u,v\}$ if $A$ contains exactly one element of $\{u,v\}$.  We say that the subset $X\subset V$ is \emph{$\delta$-separated} if for any two points $u,v \in X$, there are at least $\delta$ sets in $\mathcal{F}$ that stabs the pair $\{u,v\}$.  In 1995, Haussler proved the following interesting theorem.

\begin{lemma}[\cite{ha}]\label{dual}

Let $\mathcal{F}$ be a set system on a ground set $V$ with $\pi^{\ast}_{\mathcal{F}}(m) = c m^d$.  If $X\subset V$ is $\delta$-separated, then $|X| \leq c_1(|\mathcal{F}|/\delta)^{d}$ where $c_1= c'(d,c)$.

\end{lemma}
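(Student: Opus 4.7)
The plan is a random sampling argument in the style of VC / $\varepsilon$-net theory. Write $N=|X|$, and fix a parameter $m$ to be tuned later. Draw a multisample $\mathcal{S}=\{B_{1},\dots,B_{m}\}\subseteq\mathcal{F}$ with each $B_{i}$ independent and uniform in $\mathcal{F}$. Call $u,v\in V$ \emph{$\mathcal{S}$-equivalent} if they belong to exactly the same sets of $\mathcal{S}$; the number of equivalence classes is, by definition, at most $\pi^{\ast}_{\mathcal{F}}(m)\leq cm^{d}$, so the $N$ points of $X$ collapse into at most $cm^{d}$ classes.

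First, I would upper bound the expected number of \emph{collisions}, i.e., pairs $\{u,v\}\subseteq X$ that lie in a common class. Because $X$ is $\delta$-separated, at least $\delta$ members of $\mathcal{F}$ stab every such pair, so the probability that no $B_{i}$ stabs $\{u,v\}$ is at most $(1-\delta/|\mathcal{F}|)^{m}\leq e^{-m\delta/|\mathcal{F}|}$. By linearity of expectation, the expected number of collisions is at most $\binom{N}{2}e^{-m\delta/|\mathcal{F}|}$.

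Second, I would get a deterministic lower bound on collisions via convexity: distributing $N$ points into at most $K\leq cm^{d}$ classes of sizes $a_{1},\dots,a_{K}$ yields
$$\sum_{i=1}^{K}\binom{a_{i}}{2}\;\geq\;K\binom{N/K}{2}\;\geq\;\frac{N^{2}}{2cm^{d}}-\frac{N}{2}.$$
Combining the two estimates and simplifying gives $N/(cm^{d})\leq N\,e^{-m\delta/|\mathcal{F}|}+1$. Balancing with $m=\Theta((|\mathcal{F}|/\delta)\log(|\mathcal{F}|/\delta))$ already produces a bound of the form $N=O\!\bigl((|\mathcal{F}|/\delta)^{d}\log^{d}(|\mathcal{F}|/\delta)\bigr)$.

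The main obstacle is shaving off the $\log^{d}$ factor to reach the clean bound $|X|\leq c_{1}(|\mathcal{F}|/\delta)^{d}$ stated in the lemma. The argument above is wasteful because the sample size has to be inflated by a logarithm purely so that the entropy bound $\pi^{\ast}_{\mathcal{F}}(m)$ dominates the tail probability of non-separation. To remove the logarithm I would invoke Haussler's refined probabilistic argument — a more careful moment or chaining analysis that tracks collisions across multiple scales of resolution and reweights $\mathcal{F}$ between stages — which exchanges the logarithm for a constant $c'(d,c)$. This step is the delicate heart of the proof; everything before it is essentially a routine VC-style computation.
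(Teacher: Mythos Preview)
The paper does not supply its own proof of this lemma: it is quoted verbatim as a result of Haussler and cited to \cite{ha}. So there is no argument in the paper to compare your proposal against.

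As for the proposal itself, your first two steps are correct and standard, and they do yield the bound $|X|=O\bigl((|\mathcal{F}|/\delta)^{d}\log^{d}(|\mathcal{F}|/\delta)\bigr)$. The issue is the last paragraph. You explicitly identify the removal of the $\log^{d}$ factor as ``the delicate heart of the proof'' and then propose to handle it by ``invoking Haussler's refined probabilistic argument.'' But Haussler's refined argument \emph{is} the content of the lemma you are being asked to prove, so this is circular as written. Moreover, your description of that argument (``moment or chaining analysis that tracks collisions across multiple scales of resolution and reweights $\mathcal{F}$ between stages'') does not match what Haussler actually does: his sharp bound comes from an edge-counting argument on the unit-distance graph of the trace of $\mathcal{F}$ on a random sample, combined with the shifting technique, not from chaining or reweighting. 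So if the intent was to give a self-contained proof, the final step is a genuine gap; if the intent was merely to cite the result, then your proposal does more than the paper does, but still stops short of a proof.
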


As an application of Lemma \ref{dual}, Fox, Pach, and the author \cite{fps} showed the following.

\begin{lemma}[\cite{fps}]\label{partition}

Let $\mathcal{F}$ be a set system on a ground set $V$ such that $|V| = n$ and $\pi^{\ast}_{\mathcal{F}}(m) = c m^d$.  Then there is a constant $c_2 = c_2(d,c) > 1$ such that for any $\delta$ satisfying $1 \leq \delta
\leq |\mathcal{F}|$, there is a partition $V = S_1\cup \cdots \cup S_r$ of $V$ into
$r \leq c_2(|\mathcal{F}|/\delta)^{d}$ parts, such that any pair of vertices from the same part $S_i$ is stabbed by at most $2\delta$ members of $\mathcal{F}$.  

\end{lemma}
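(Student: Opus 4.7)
The plan is to convert the stabbing count into a pseudometric on $V$, use Haussler's bound (Lemma~\ref{dual}) to extract a small net, and then form a Voronoi-type partition around it.

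For $u,v \in V$, let $\sigma(u,v)$ denote the number of sets in $\mathcal{F}$ that stab $\{u,v\}$. The first step is to verify that $\sigma$ satisfies the triangle inequality: if a set $A \in \mathcal{F}$ stabs $\{u,v\}$, then for any third point $x \in V$ the set $A$ stabs exactly one of $\{u,x\}$ and $\{x,v\}$ (according to whether $x \in A$), and summing over $A$ gives $\sigma(u,v) \leq \sigma(u,x)+\sigma(x,v)$.

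Next, I would take $X = \{x_1,\ldots,x_r\} \subseteq V$ to be a maximal subset with $\sigma(x_i,x_j) > \delta$ for all $i \neq j$. Then $X$ is $(\delta+1)$-separated in the sense of Lemma~\ref{dual}, so that lemma yields $r \leq c_1(|\mathcal{F}|/\delta)^d$ for some constant $c_1 = c_1(d,c)$. By the maximality of $X$, every $v \in V$ admits some $x_i \in X$ with $\sigma(v,x_i) \leq \delta$ (otherwise $v$ could have been added to $X$). Assigning each $v$ to one such $x_i$, breaking ties arbitrarily, and letting $S_i$ be the set of vertices assigned to $x_i$ produces a partition $V = S_1 \cup \cdots \cup S_r$ with $r \leq c_2(|\mathcal{F}|/\delta)^d$ (absorbing $c_1$ into $c_2$). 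For any $u,v \in S_i$ the triangle inequality then gives $\sigma(u,v) \leq \sigma(u,x_i)+\sigma(x_i,v) \leq 2\delta$, which is exactly the conclusion of the lemma.

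Once Lemma~\ref{dual} is available, no step is genuinely hard; the one point to flag is that the factor of two arising from the triangle inequality is precisely what the statement accommodates via its $2\delta$ threshold, so no finer argument is needed. A technical caveat is simply to keep track of the absolute constant as one passes from the $(\delta+1)$-separation of $X$ to the stated bound $c_2(|\mathcal{F}|/\delta)^d$, which costs only a factor depending on $d$.
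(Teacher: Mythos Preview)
The paper does not give its own proof of this lemma; it merely cites it from \cite{fps}. Your argument is correct and is in fact the standard one: the symmetric-difference count $\sigma$ is a pseudometric, a maximal $\delta$-net has size controlled by Haussler's packing lemma (Lemma~\ref{dual}), and the Voronoi cells of that net form the desired partition with the $2\delta$ stabbing bound coming from the triangle inequality. Nothing further is needed.
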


\noindent Together with the pigeonhole principle, Lemma \ref{partition} implies the following.

\begin{corollary}[\cite{fps}]\label{pigeon}

Let $\mathcal{F}$ be a set system on a ground set $V$ such that $|V| = n$ and $\pi^{\ast}_{\mathcal{F}}(m) = c m^d$.  Then there is a constant $c_2 = c_2(d,c)$ such that for any $\delta$ satisfying $1 \leq \delta
\leq |\mathcal{F}|$, there is a subset $U\subset V$ such that $|U| \geq \frac{n}{c_2(|\mathcal{F}|/\delta)^{d}}$, and any pair of vertices in $U$ is stabbed by at most $2\delta$ members of $\mathcal{F}$.
\end{corollary}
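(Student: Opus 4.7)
The plan is to derive Corollary \ref{pigeon} as an immediate consequence of Lemma \ref{partition} via the pigeonhole principle. Given any $\delta$ with $1 \leq \delta \leq |\mathcal{F}|$, I would first apply Lemma \ref{partition} to obtain a partition $V = S_1 \cup \cdots \cup S_r$ with $r \leq c_2(|\mathcal{F}|/\delta)^{d}$, where every pair of vertices lying in a common part $S_i$ is stabbed by at most $2\delta$ members of $\mathcal{F}$.

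Next, since the $r$ parts partition a ground set of size $n$, there must be some index $i^{*}$ with $|S_{i^{*}}| \geq n/r$. Combined with the upper bound on $r$, this gives
\[
|S_{i^{*}}| \;\geq\; \frac{n}{c_2(|\mathcal{F}|/\delta)^{d}}.
\]
Setting $U := S_{i^{*}}$, we immediately inherit the stabbing property from Lemma \ref{partition}: any two vertices in $U$ lie in the same part of the partition, so they are stabbed by at most $2\delta$ sets of $\mathcal{F}$.

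The two claimed properties of $U$ are then simultaneously satisfied, using the same constant $c_2 = c_2(d,c)$ as in Lemma \ref{partition}. There is no real obstacle here; the entire content of the corollary is packaged inside Lemma \ref{partition}, and pigeonhole is only extracting the largest class. The only small bookkeeping point is to note that the constant in the corollary can be taken identical to the one in the partition lemma, which is automatic from the argument above.
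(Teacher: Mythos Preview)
Your proposal is correct and matches the paper's approach exactly: the paper simply states that the corollary follows from Lemma~\ref{partition} together with the pigeonhole principle, which is precisely what you have written out.
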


In \cite{welzl}, Chazelle and Welzl proved that if $\mathcal{F}$ is a set system with ground set $V$, such that $|V| = n$, $n$ is even, and $\pi^{\ast}_{\mathcal{F}}(m)  = O(m^d)$, then there is a perfect matching $M$ on $V$ with the property that for any set $A \in \mathcal{F}$, $A$ stabs at most $O(n^{1 - 1/d})$ matchings in $M$.  One of the main tools we will need for the proof of Theorem \ref{main} is the following variant of Chazelle and Welzl's matching theorem, which we believe to be of independent interest.

\begin{lemma}\label{key}
    Let $\mathcal{F}$ be a set system on a ground set $V = \{v_1,\ldots, v_n\}$ such that $\pi^{\ast}_{\mathcal{F}}(m) \leq cm^d$ and $|\mathcal{F}|\leq cn^d$, where $d\geq 2$.  Then there is a subset $X\subset V$ such that $|X| \leq 2n^{1/2 + 1/(2d)}$, a constant $c_3 = c_3(d,c)$, and a perfect matching $M$ on $V\setminus X$, such that the following properties holds.

    \begin{enumerate}

        \item For each $\{v_i,v_j\} \in M$, $|i-j| \leq n^{1- 1/(2d)}$.

        \item For each $\{v_i,v_j\} \in M$, at most $c_3(|\mathcal{F}|/n^{1/(2d)})$ sets in $\mathcal{F}$ stabs $\{v_i,v_j\}$.

                        \item For each set $A \in \mathcal{F}$, $A$ stabs at most $c_3n^{1 - 1/d}$ members of $M$.  
 
    \end{enumerate}
\end{lemma}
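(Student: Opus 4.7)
The plan is to construct $X$ and $M$ in three steps, combining the block decomposition and Corollary~\ref{pigeon} to enforce properties 1 and 2, with Chazelle and Welzl's matching theorem to obtain property 3. First, set $\delta = |\mathcal{F}|/n^{1/(2d)}$ so that $(|\mathcal{F}|/\delta)^d = n^{1/2}$, and divide $V = \{v_1,\dots,v_n\}$ into $k = n^{1/(2d)}$ consecutive blocks $B_1,\dots,B_k$ of size $b = n^{1-1/(2d)}$ each. Any matching edge whose endpoints lie in a common block then automatically satisfies property 1.

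Within each block $B_i$, I would apply Corollary~\ref{pigeon} iteratively to the residual, writing $B_i^0 = B_i$ and letting $B_i^j$ denote the residual after removing previously extracted subsets. Each iteration produces a subset $U_i^j \subseteq B_i^{j-1}$ of size $|U_i^j| \geq |B_i^{j-1}|/(c_2 n^{1/2})$ such that every pair in $U_i^j$ is stabbed by at most $2\delta = 2|\mathcal{F}|/n^{1/(2d)}$ members of $\mathcal{F}$; this enforces property 2 for every pair inside $U_i^j$. I would continue extracting subsets until the residual satisfies $|B_i^{t_i}| \leq n^{1/2}$, and then place these residual vertices into $X$. Aggregated over all $k$ blocks, this gives $|X| \leq k \cdot n^{1/2} = n^{1/2+1/(2d)}$, within the required cardinality bound.

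Within each subset $U_i^j$ I would invoke Chazelle and Welzl's matching theorem applied to the restricted set system $\mathcal{F}|_{U_i^j}$, producing a perfect matching $M_i^j$ such that every $A \in \mathcal{F}$ stabs at most $C |U_i^j|^{1-1/d}$ edges of $M_i^j$. Take $M = \bigcup_{i,j} M_i^j$. Properties 1 and 2 are then immediate, since every edge of $M$ lies in a single block $B_i$ and in a single subset $U_i^j$. Property 3 would be obtained by summing the Chazelle-Welzl bounds $\sum_{i,j} |U_i^j|^{1-1/d}$, exploiting the geometric decay $|B_i^{j-1}| \leq |B_i|(1-1/(c_2 n^{1/2}))^{j-1}$ to control the sum inside each block and then summing over the $k$ blocks.

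The main obstacle will be establishing property 3 with the sharp exponent $1 - 1/d$: a naive Jensen-style aggregation of $|U_i^j|^{1-1/d}$ over all iterations and blocks introduces spurious polynomial factors strictly above $n^{1-1/d}$. To obtain the stated bound I expect one of two refinements will be necessary. One option is to take each $U_i^j$ as large as possible subject to the $2\delta$-stabbing constraint, rather than only at the pigeonhole lower bound, which reduces the number of iterations and tightens the summation. A cleaner alternative is to replace the independent per-subset matchings by a single global Chazelle-Welzl-type construction on $V \setminus X$ that is restricted to within-$U_i^j$ candidate pairs, certifying the existence of small-weight candidate pairs at each step via a weighted analogue of Corollary~\ref{pigeon} applied to the current weighting of $\mathcal{F}$.
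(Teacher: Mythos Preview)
Your main construction---independent Chazelle--Welzl matchings on each extracted subset $U_i^j$---does not give property~3 with the exponent $1-1/d$, and you correctly flag this. The concave aggregation $\sum_{i,j}|U_i^j|^{1-1/d}$ is maximized when the $U_i^j$ are many and small, and your extraction scheme does not prevent that: starting from a block of size $n^{1-1/(2d)}$ and peeling off a $1/(c_2 n^{1/2})$ fraction each round, it takes $\Theta(n^{1/2}\log n)$ rounds to reach residual size $n^{1/2}$, so you may end up with $\Theta(n^{1/2+1/(2d)}\log n)$ subsets in total. Refinement~(a) does not help, since Corollary~\ref{pigeon} gives no upper bound on how large the maximal low-stabbing subset must be.

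Your refinement~(b)---a single global reweighting loop, constrained at each step to match a pair inside one prescribed cell---is exactly the mechanism the paper uses, but the paper sets it up more efficiently. Instead of peeling off subsets block by block, it applies Lemma~\ref{partition} \emph{once} to all of $V$ with $\delta\asymp |\mathcal{F}|/n^{1/(2d)}$, obtaining a partition into at most $n^{1/2}$ low-stabbing parts; it then refines each part by the index blocks, yielding a single fixed partition $\mathcal{P}'$ into $t<n^{1/2+1/(2d)}$ cells. Membership in a common cell of $\mathcal{P}'$ already certifies properties~1 and~2. The reweighting loop then runs on the remaining vertex set $V_i$: apply Corollary~\ref{pigeon} to the \emph{weighted} system $\mathcal{F}_i$ to produce a subset $U_i\subset V_i$ with $|U_i|>t$ in which every pair has small weighted stabbing number, and pigeonhole two points of $U_i$ into the same cell of $\mathcal{P}'$. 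The loop halts once $|V_i|\le 2t$, and the leftover is $X$; this is what pins $|X|\le 2n^{1/2+1/(2d)}$ exactly, whereas your extra $\log n$ factor in the cell count would propagate into $|X|$ as well.

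So the concrete gap is twofold: replace the iterated within-block extraction by a single call to Lemma~\ref{partition} followed by the block refinement, and then actually carry out the global reweighting with pigeonhole into the resulting $\mathcal{P}'$ rather than leaving it as a sketch. With those changes your plan coincides with the paper's proof.
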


\begin{proof}
    Let $\mathcal{F}$ be a set system with ground set $V$ such that $\pi^{\ast}_{\mathcal{F}}(m) = cm^d$, and let $c_3  = c_3(d,c)$ be a large constant that will be determined later.  We apply Lemma \ref{partition} to $\mathcal{F}$ with parameter $\delta = c_2\left(\frac{|\mathcal{F}|}{n^{1/(2d)}}\right)$, where $c_2 > 1$ is defined in Lemma \ref{partition}, to get a partition on $V$,

    $$\mathcal{P}:V = P_1\cup P_2\cup \cdots \cup P_r,$$

\noindent such that $r <  n^{1/2}$ and with the properties described in Lemma \ref{partition}.  We further partition each part $P_k$ into at most $n^{1/(2d)}$ smaller parts, such that two vertices $v_i,v_j$ from a resulting part satisfies $|i-j|  \leq n^{1- 1/(2d)}$.  Let

$$\mathcal{P}':V = S_1\cup S_2\cup \cdots \cup S_{t},$$

\noindent the be resulting partition, where $t < n^{1/2  + 1/(2d)}$.  Hence, for any two vertices $v_i,v_j \in S_k$ in the same part, at most $2\delta = 2c_2\left(\frac{|\mathcal{F}|}{n^{1/(2d)}}\right)$ sets in $\mathcal{F}$ stabs $\{v_i,v_j\}$, and we have $|i-j| \leq n^{1- 1/(2d)}.$   

In what follows, we construct our matching $M$ by picking the pairs $\{x_i,y_i\}$ one by one, such that for each pair $\{x_i,y_i\} \in M$, both $x_i$ and $y_i$ lie in the same part of $\mathcal{P}'$.  By setting $c_3$ sufficiently large, this will guarantee that the first two properties of the lemma to hold for $M$.   In order to achieve the third property, we apply a well-known iterative reweighting strategy (see also \cite{welzl,matousek}).

Suppose we have obtained pairs $M_i = \{\{x_1,y_1\}, \{x_2,y_2\}, \ldots, \{x_i,y_i\}\}$ for our matching, where $i \leq n/2 - n^{1/2 + 1/(2d)}$.  We define the weight $w_i(A)$ of a set $A \in \mathcal{F}$ as $2^{\kappa_i(A)}$, where $\kappa_i(A)$ is the number of matchings in $M_i$ that is stabbed by $A \in \mathcal{F}$.     Let $V_{i} = V\setminus \{x_1,y_1,\ldots, x_i,y_i\},$ and let $\mathcal{F}_{i}$ be a set system with ground set $V_{i}$, where we add the set $A\cap V_{i}$ to $\mathcal{F}_i$ with multiplicity $w_i(A)$.  Thus, $w_0(A) = 1$ for all $A \in \mathcal{F}$, and we have $V_0 = V$ and $\mathcal{F}_0 = \mathcal{F}$.

We select $\{x_{i + 1},y_{i + 1}\}$ in $V_i$ as follows.   Since $\pi^{\ast}_{\mathcal{F}_i}(m) \leq cm^d$, we apply Corollary \ref{pigeon} to $\mathcal{F}_i$ with parameter $\delta = c_2\frac{|\mathcal{F}|}{(n - 2i)^{1/d}}$, where $c_2 > 1$ is defined in Lemma \ref{partition}, to obtain a subset $U_i\subset V_i$ such that $|U_i| \geq (n - 2i)$, and each pair of vertices in $U_i$ is stabbed by at most $2\delta = 2 c_2\frac{|\mathcal{F}_i|}{(n-2i)^{1/d}}$ sets in $\mathcal{F}_i$.  Since $i \leq n/2 - n^{1/2 + 1/(2d)}$, we have $$|U_i|\geq (n -2i)  \geq 2n^{1/2 + 1/(2d)} > t.$$  By the pigeonhole principle, two vertices of $U_i$ must lie in the same part of the partition $\mathcal{P}'$.  Let $x_{i+1},y_{i+1}$ be two such vertices and set $M_{i + 1}  = M_i \cup \{x_{i + 1},y_{i + 1}\}$ and $V_{i + 1} = V_i \setminus \{x_{i + 1},y_{i + 1}\}$.  Therefore, we have

$$|\mathcal{F}_{i   + 1}| \leq |\mathcal{F}_i| + 2 c_2\frac{|\mathcal{F}_i|}{(n-2i)^{1/d}} = |\mathcal{F}_i| \left(1 + \frac{2c_2}{(n - 2i)^{1/d}}\right).$$

We repeat this process until we have obtained a matching $M$ of size $w = \lfloor n/2 - n^{1/2 + 1/(2d)}\rfloor$. At this point, we stop and set $X$ to be the remaining vertices $V\setminus \{x_1,y_1,\ldots, x_w,y_w\}$.  Therefore, we have $|X| \leq 2n^{1/2 + 1/(2d)}$ and

\begin{align*} 
|\mathcal{F}_{w}|  & \leq   \prod\limits_{i = 0}^{w  - 1}|\mathcal{F}| \left(1 + \frac{2c_2}{(n - 2i)^{1/d}}\right)   \\ 
  & \leq |\mathcal{F}|e^{\sum_{i =0 }^{w  - 1}  \frac{2c_2}{(n-2i)^{1/d}} }  \\
  & \leq |\mathcal{F}|e^{4c_2 n^{1 -1/d}}\\
& \leq e^{4c_2n^{1 - 1/d}  + \ln(|\mathcal{F}|)}\\
 & \leq  e^{8c_2n^{1 - 1/d}}.
\end{align*}

\noindent The last inequality follows from the fact that $|\mathcal{F}| \leq cn^d$.  For any set $A \in \mathcal{F}$, if $A$ stabs $\kappa$ members in $M$, then we have $2^{\kappa} \leq  e^{8c_2n^{1 - 1/d}}$, which implies that $\kappa \leq c_3n^{1 -1/d}$  for $c_3$ sufficiently large.  \end{proof}

\section{Proof of Theorem \ref{main}}\label{p1}

Let $G = (U,E)$ be a complete simple topological graph with $n+1$ vertices.  Given a subset $S \subset U$, we denote by $G[S]$ the topological subgraph of $G$ induced by the vertex set $S$.  For simplicity, we write $G[u,v] = uv$.  In order to avoid confusion between topological edges and pairs of vertices, we write $uv$ when referring to the topological edge in the plane, and write $\{u,v\}$ when referring to the pair.  By the simple condition, all 3-cycles in $G$ are non-self-intersecting, and we will sometimes refer to them as \emph{triangles} in $G$.  Also for convenience, say that a vertex $z$ lies \emph{inside} the triangle $T = G[u,v,w]$, if $z$ lies in the bounded region enclosed by $G[u,v,w]$.  Moreover, we say that the edge $xy\in E(G)$ lies \emph{inside} of triangle $G[u,v,w]$ if $x$ and $y$ lie inside $G[u,v,w]$, and $xy$ does not cross  $G[u,v,w]$.  See Figure \ref{figint}.

Notice that the edges of $G$ divide the plane into several cells (regions), one of which is unbounded.  We can assume that there is a vertex $v_0 \in U$ such that $v_0$ lies on the boundary of the unbounded cell.  Indeed, otherwise we can project $G$ onto a sphere, then choose an arbitrary vertex $v_0$ and then project $G$ back to the plane such that $v_0$ lies on the boundary of the unbounded cell, and moreover two edges cross in the new drawing if and only if they crossed  in the original drawing.

 Consider the topological edges emanating out from $v_0$, and label their endpoints $v_1,\dots , v_{n}$ in
counter-clockwise order.  Thus, for $i < j$, the sequence $(v_0,v_i,v_j)$ appears in counter-clockwise order along the non-self-intersecting 3-cycle $G[v_0,v_i,v_j]$. Set $V = U \setminus v_0$.  For $1\leq i < j \leq n$, let $I_{i,j} = \{v_k \in V: i < k < j\}$. 

We define the set system $\mathcal{F}$ with ground set $V$ as follows.  For each pair $v_i,v_j \in V$ with $i < j$, we define the set $T_{i,j}$ to be the set of vertices in $V$ that lie inside the triangle $G[v_0,v_i,v_j]$.  See Figure~\ref{set1} for a small example. Then set

 \begin{figure}
  \centering
  \subfigure[{Edge $xy$ inside triangle $G[u,v,w]$.}]{\label{figint}\includegraphics[width=0.18\textwidth]{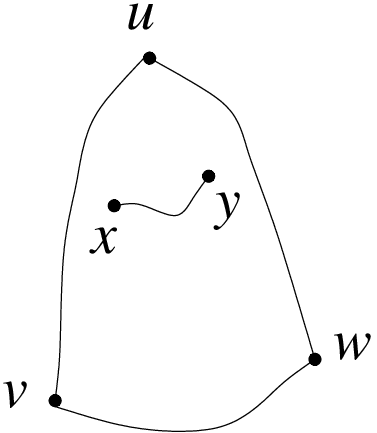}}\hspace{3cm}
    \subfigure[$T_{3,6} = \{v_{1},v_4\}$.]{\label{set1}\includegraphics[width=0.3\textwidth]{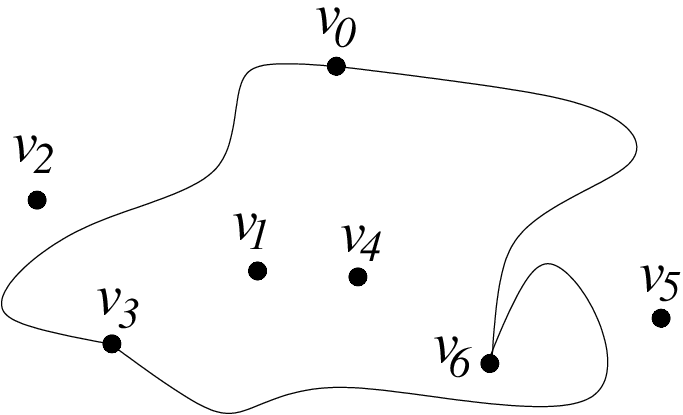}}
 \caption{Triangles in $G$.}
\end{figure}

$$\mathcal{F} = \{T_{i,j}: 1\leq i < j \leq n\}.$$

In \cite{suk}, the author proved the following lemma, and for sake of completeness, we include its short proof here.

\begin{lemma}[\cite{suk}]
\label{s1}
Let $\mathcal{F}$ be the set system with ground set $V$ defined as above.  Then $\pi^*_{\mathcal{F}}(m) \leq 5m^2$.

\end{lemma}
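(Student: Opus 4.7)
The plan is to apply Euler's formula to the planar arrangement formed by the chosen $m$ triangles, using the common apex $v_0$ to keep the count clean.

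Given any $m$ sets $T_{i_1,j_1},\ldots,T_{i_m,j_m}\in\mathcal{F}$, let $\mathcal{A}$ denote the arrangement of the non-self-intersecting triangles $G[v_0,v_{i_k},v_{j_k}]$ in the plane. Because $G$ is simple, no vertex of $V$ lies in the interior of an edge, so every $v\in V$ lies strictly inside some face of $\mathcal{A}$, and two vertices in the same face belong to exactly the same subfamily of these $m$ triangles. Hence the number of equivalence classes is at most the face count $F'$ of $\mathcal{A}$, and it suffices to show $F'\le 5m^2$.

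To count $F'$, first tally the skeleton of $\mathcal{A}$ before any crossings subdivide edges. Each triangle contributes two rays through $v_0$ and one chord $v_{i_k}v_{j_k}$; the chords are pairwise distinct (since the $m$ triangles are), so there are exactly $m$ chords, while the number of distinct rays is some $r\le 2m$. The skeleton therefore has $r+1$ vertices ($v_0$ together with the $r$ ray endpoints) and $r+m$ edges, and it is connected through $v_0$. Each of the $c$ crossings of $\mathcal{A}$ contributes one new vertex and increases the edge count by $2$, so the full arrangement has $V'=r+1+c$ vertices and $E'=r+m+2c$ edges. Euler's formula on this connected planar graph yields
\[
F' \;=\; E' - V' + 2 \;=\; m + c + 1.
\]

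Finally, simplicity of $G$ implies that any two edges cross at most once, so $c\le \binom{r+m}{2}\le \binom{3m}{2}$. Substituting,
\[
F' \;\le\; m + 1 + \binom{3m}{2} \;=\; \tfrac{9m^2 - m + 2}{2} \;\le\; 5m^2
\]
for every $m\ge 1$, which completes the bound. The only nonroutine step is the observation that all $m$ triangles share $v_0$: this is what keeps the skeleton connected and produces the tight identity $F'=m+c+1$, without which an Euler count over the raw edges would not cancel to leading order $m$. The crossing bound and final arithmetic are then completely elementary.
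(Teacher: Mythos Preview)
Your proof is correct and takes a genuinely different route from the paper. The paper argues by induction on $m$: since the $m$th triangle consists of three edges, each crossing at most $3(m-1)$ edges of the previous triangles, adding it creates at most $9(m-1)$ new regions, and the bound $5(m-1)^2 + 9(m-1)\le 5m^2$ closes the induction. You instead apply Euler's formula directly, and the key observation that all $m$ triangles share the apex $v_0$ is exactly what makes this work: it forces the skeleton to be connected and yields the clean identity $F' = m+c+1$, after which the crude estimate $c\le\binom{3m}{2}$ finishes the job. Both arguments are short; yours is arguably more transparent in that it isolates precisely where the common apex matters, while the paper's induction sidesteps any connectivity issue entirely.

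One small quibble: the claim that ``every $v\in V$ lies strictly inside some face of $\mathcal{A}$'' is not literally true for the at most $r\le 2m$ vertices among $v_{i_1},v_{j_1},\ldots,v_{i_m},v_{j_m}$, which sit at corners of the arrangement. This is harmless---their membership patterns either coincide with that of an adjacent face, or one simply absorbs the extra $2m$ into the count (or uses the trivial bound $2^m\le 5m^2$ for $m\le 4$)---and the paper's proof glosses over exactly the same point when it asserts that bounding the number of regions suffices.
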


\begin{proof}
In order to obtain an upper bound on $\pi^*_{\mathcal{F}}(m)$, we simply need to bound the maximum number of regions for which $m$ triangles in $G$ partitions the plane into.  We proceed by induction on $m$.  The base case when $m = 1$ is trivial.  Now assume that the statement holds up to $m-1$.  Notice that any set of $m-1$ triangles consists of at most $3(m-1)$ topological edges of $G$.  Therefore, when we add the $m$th triangle $T = G[v_0,v_i,v_j]$, each topological edge in $T$ creates at most $3(m-1)$ new crossing points since $G$ is simple.  Hence, triangle $T$ creates at most $9(m-1)$ new regions in the arrangement.  By the induction hypothesis, we have at most

$$5(m-1)^2 + 9(m-1) \leq 5m^2$$

\noindent cells created by the $m$ triangles.\end{proof}

Since the set system $\mathcal{F}$ satisfies $\pi^{\ast}_{\mathcal{F}}(m) \leq 5 m^2$ and $|\mathcal{F}| \leq n^2/2$, we can apply Lemma \ref{key} to $\mathcal{F}$ to obtain a subset $X\subset V$ such that $|X|\leq 2n^{3/4}$, a perfect matching $M_1$ on $V\setminus X$, and an absolute constant $c_3$ such that the following holds.

   \begin{enumerate}

        \item For each $\{v_i,v_j\} \in M_1$, $|i-j| \leq n^{3/4}$.

        \item For each $\{v_i,v_j\} \in M_1$, at most $c_3(|\mathcal{F}|/n^{1/4})$ sets in $\mathcal{F}$ stabs $\{v_i,v_j\}$.

                        \item For each set $A \in \mathcal{F}$, $A$ stabs at most $c_3n^{1/2}$ members of $M_1$.  
 
    \end{enumerate}

Let $\Gamma_1$ be a directed auxiliary graph such that $V(\Gamma_1) = M_1$, and for $\{v_i,v_j\},\{v_k,v_{\ell}\}\in V(\Gamma_1)$, we have a directed edge from $\{v_i,v_j\}$ to   $\{v_k,v_{\ell}\}$ if and only if $T_{i,j}$ stabs the pair $\{v_k,v_{\ell}\}$.  By the third property above, $\Gamma_1$ has at most $|M_1|c_3n^{1/2} \leq c_3n^{3/2}$ directed edges, and therefore, the sum of the in-degree of each vertex in $\Gamma_1$ is at most $c_3n^{3/2}$.  Hence, the number of vertices in $\Gamma_1$ whose in-degree at least $n^{3/4}$ is at most

$$\frac{c_3n^{3/2}}{n^{3/4}} = c_3n^{3/4}.$$

Let $M_2\subset M_1$ be the members in $M_1$ whose in-degree is less than $n^{3/4}$ in $\Gamma_1$.  Hence, $|M_1\setminus M_2|\leq c_3n^{3/4}$, and

$$|M_2| \geq \frac{n}{2} - n^{3/4} - c_3n^{3/4}.$$

\noindent For integers $1 \leq i < j \leq n$, let $\phi_{M_2}(v_i,v_j)$ denote the number of matchings $\{v_k,v_{\ell}\} \in M_2$ such that both $v_k,v_{\ell}$ lie inside triangle $G[v_0,v_i,v_j]$.  Set $\{v_x,v_y\} \in M_2$ to be the matching such that $\phi_{M_2}(v_x,v_y) \leq \phi_{M_2}(v_i,v_j)$,  for any $\{v_i,v_j\} \in M_2$.  We now make the following claim.

   \begin{claim}
        The topological edge $v_xv_y$ crosses at most $c_4n^{7/4}$ other edges in $G$, where $c_4$ is a sufficiently large absolute constant. 
   \end{claim}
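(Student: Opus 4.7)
The plan is to decompose the edges of $G$ crossing $v_xv_y$ into three groups and bound each separately: the $v_0$-star edges $v_0v_k$ (count $C_0$), the edges $v_iv_j$ with $\{i,j\}\cap\{0,x,y\}=\emptyset$ for which $T_{i,j}$ stabs $\{v_x,v_y\}$ (count $\alpha_1$), and the remaining such edges (count $\alpha_2$). Property~(2) of Lemma~\ref{key} applied to $\{v_x,v_y\}\in M_2\subset M_1$ immediately gives $\alpha_1\le c_3n^{7/4}/2$. For $\alpha_2$: if $T_{i,j}$ does not stab $\{v_x,v_y\}$ but $v_iv_j$ crosses $v_xv_y$, then $v_x$ and $v_y$ lie on the same side of the Jordan curve $G[v_0,v_i,v_j]$, so $v_xv_y$ must cross the triangle boundary an even number of times; since it crosses $v_iv_j$ and shares no endpoint with $v_0v_i$ or $v_0v_j$, the second crossing must lie on one of the latter two. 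Hence $\{i,j\}\cap K\ne\emptyset$, where $K=\{k:v_0v_k\text{ crosses }v_xv_y\}$, yielding $\alpha_2\le n\,C_0$.

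Combining these bounds, the number of edges crossing $v_xv_y$ is at most $c_3n^{7/4}/2+(n+1)C_0$, and the claim reduces to $C_0=O(n^{3/4})$. A direct analysis of the counterclockwise edge order around $v_0$ shows $C_0=|T_{x,y}\triangle A_{x,y}|$, where $A_{x,y}=\{v_k:x<k<y\}$ has size at most $n^{3/4}$ by property~(1); so it further suffices to bound $|T_{x,y}|=O(n^{3/4})$. Set $X'=X\cup V(M_1\setminus M_2)$, which has size $O(n^{3/4})$ by construction of $M_2$. Every vertex of $T_{x,y}\setminus X'$ is matched in $M_2$, either inside $T_{x,y}$ (contributing $2\phi_{M_2}(v_x,v_y)$) or outside (forming an $M_2$-matching stabbed by $T_{x,y}$, of which there are at most $c_3n^{1/2}$ by property~(3) applied to $T_{x,y}\in\mathcal{F}$). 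Thus $|T_{x,y}|\le 2\phi_{M_2}(v_x,v_y)+O(n^{3/4})$, and the claim reduces further to $\phi_{M_2}(v_x,v_y)=O(n^{3/4})$.

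By the minimality in the choice of $\{v_x,v_y\}$,
\[
\phi_{M_2}(v_x,v_y)\le\frac{1}{|M_2|}\sum_{\{v_i,v_j\}\in M_2}\phi_{M_2}(v_i,v_j),
\]
so it suffices to bound this sum by $O(n^{7/4})$. Interpreting the sum as counting ordered nested pairs of matchings in $M_2$, I plan to decompose according to the angular position of the inner matching's vertices $v_k,v_\ell$ relative to $(i,j)$. Contributions where at least one of $v_k,v_\ell$ lies in $A_{i,j}$ are at most $O(n^{3/4})$ per matching $\{v_i,v_j\}$ by property~(1), totalling $O(n^{7/4})$. The remaining case is when both $v_k,v_\ell\in T_{i,j}\setminus A_{i,j}$; each such vertex $v$ satisfies that $v_0v$ crosses $v_iv_j$, so the contribution is at most $\tfrac{1}{2}|T_{i,j}\setminus A_{i,j}|$ per matching. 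The hard part will be closing $\sum_{\{v_i,v_j\}\in M_2}|T_{i,j}\setminus A_{i,j}|=O(n^{7/4})$, which I expect to require a further application of property~(3) together with the in-degree bound on $\Gamma_1$ used to define $M_2$, in order to control the total number of crossings between the $v_0$-star edges and the $M_2$-edges in the non-angular configuration, thereby avoiding the circular dependence one encounters with a naive averaging.
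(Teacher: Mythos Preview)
Your reduction of the problem to the bound $\phi_{M_2}(v_x,v_y)=O(n^{3/4})$ is correct and parallels the paper's setup: your $C_0$ is the paper's $|U_1\cup(A_{x,y}\setminus T_{x,y})|$, your $\alpha_1$ is the paper's $|E_3|$, and your identity $C_0=|T_{x,y}\triangle A_{x,y}|$ together with the $X'$/stabbed decomposition of $T_{x,y}$ is sound. The gap is in the final step.

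The averaging argument you propose for $\phi_{M_2}(v_x,v_y)$ is circular. After peeling off the $A_{i,j}$-contribution and the vertices in $X'$, bounding $\sum_{M_2}|T_{i,j}\setminus A_{i,j}|$ comes down to counting, for each $v_k$ matched in $M_2$ to some $v_\ell$, the number of $\{v_i,v_j\}\in M_2$ with $v_k\in T_{i,j}$. Splitting on whether $v_\ell\in T_{i,j}$, the in-degree bound on $\Gamma_1$ (and property~(3)) only controls the case $v_\ell\notin T_{i,j}$, contributing $O(n^{7/4})$; the case $v_\ell\in T_{i,j}$ returns exactly $2\sum_{M_2}\phi_{M_2}(v_i,v_j)$ with coefficient~$1$, so the chain of inequalities collapses to $S\le O(n^{7/4})+S$. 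This is not an oversight in bookkeeping: nested configurations of the $M_2$-triangles are compatible with all three properties of Lemma~\ref{key} and with the $\Gamma_1$ in-degree bound, yet can make $\sum_{M_2}\phi_{M_2}$ as large as $\Theta(n^2)$, so the average genuinely need not be $O(n^{3/4})$.

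The paper does not average. Instead, it argues by contradiction: if $|U_1|$ (your $C_0$, up to $|A_{x,y}|$) were $\gg n^{3/4}$, one extracts a large subfamily $M_4\subset M_2$ of matchings whose topological edges lie entirely inside $G[v_0,v_x,v_y]$. On $M_4$ one builds an auxiliary digraph $\Gamma_4$ with an arc $\{v_k,v_\ell\}\to\{v_i,v_j\}$ when $v_k,v_\ell\in T_{i,j}$; a short Jordan-curve observation shows $\Gamma_4$ is almost a tournament (double arcs force $v_k,v_\ell\in I_{i,j}$ or vice versa, hence are rare by property~(1)). A vertex $\{v_{x'},v_{y'}\}$ of low in-degree in $\Gamma_4$ then satisfies $\phi_{M_2}(v_{x'},v_{y'})<\phi_{M_2}(v_x,v_y)$, contradicting the minimality in the choice of $\{v_x,v_y\}$. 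This comparison step is the missing idea in your plan.
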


   \begin{proof}
 
 We define subsets $E_0,E_1,E_2,E_3,E_4 \subset E(G)$ as follows.  Let $E_0$ be the edges in $G$ incident to $v_0$.  Let $E_1 \subset E(G)$ be the set of edges in $G$ that has an endpoint in $X$.   Let $E_2$ be the set of edges $v_iv_j$ in $G$ with an endpoint in $I_{x,y} = \{v_k \in V: x < k < y\}$.  Let $E_3$ be the set of edges $v_iv_j$ such that $v_iv_j$ crosses $v_xv_y$, and the set $T_{i,j}$ stabs the pair $\{v_x,v_y\}$.   Let $E_4$ be the set of edges $v_iv_j$ that crosses $v_xv_y$, such that $v_iv_j$ is not in $E_0,E_1$, $E_2$, nor $E_3$.  It is easy to see that for any edge $v_iv_j \in E(G)$ that crosses $v_xv_y$, we have $v_iv_j \in E_0\cup E_1 \cup\cdots\cup E_4$. 

Clearly, we have
\begin{equation}\label{e0}|E_0| \leq n.\end{equation}

\noindent Since $|X| \leq 2n^{3/4}$, we have
\begin{equation}\label{e1}|E_1| \leq 2n^{7/4}.\end{equation} 
 
\noindent By the first property of our matching,  we have $|x - y| < n^{3/4}$.  Therefore, we have \begin{equation}\label{e2}
|E_2| \leq n^{7/4}.\end{equation}

\noindent Since $\{v_x,v_y\} \in M_2$, at most $c_3|\mathcal{F}|/n^{1/4} \leq c_3n^{7/4}$ sets in $\mathcal{F}$ stabs $\{v_x,v_y\}$.  Hence, we have \begin{equation}\label{e3}|E_3| \leq c_3n^{7/4}.\end{equation}

 \begin{figure}
  \centering
\subfigure[$x < y < i < j$.]{\label{fignote42}\includegraphics[width=0.22\textwidth]{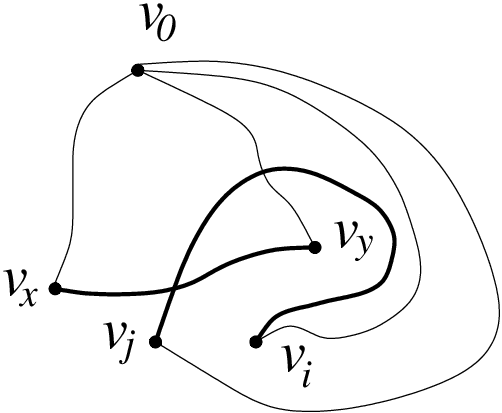}}  \hspace{.5cm}
  \subfigure[$i < x < y < j$.]{\label{fignote4}\includegraphics[width=0.22\textwidth]{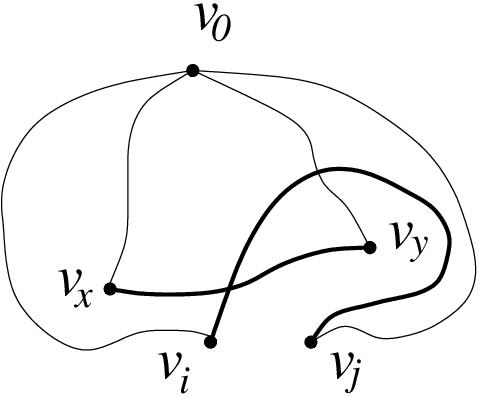}}\hspace{.5cm}
  \subfigure[$i < x < y < j$.]{\label{fignote42a}\includegraphics[width=0.22\textwidth]{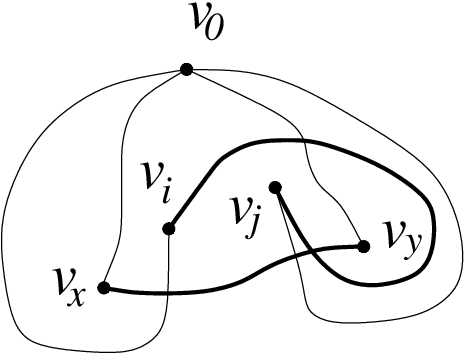}}\hspace{.5cm}
  \subfigure[$x < y < i < j$.]{\label{fignote42b}\includegraphics[width=0.22\textwidth]{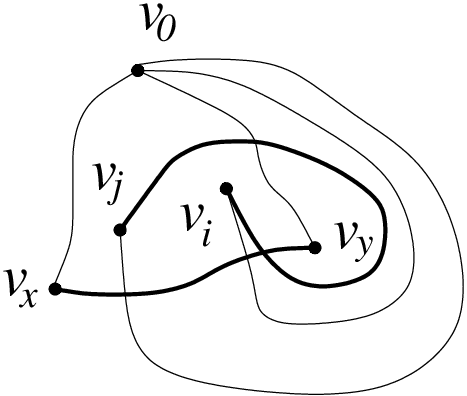}}
 \caption{Observation \ref{obs1} with $v_i,v_j \in U_1$ or $v_i,v_j \in U_2$.}\label{note4f}
\end{figure}

\noindent Thus, it remains to bound $|E_4|$.  Let

 $$U_1 = \{v_k \in V\setminus I_{x,y}: v_0v_k \textnormal{ crosses } v_xv_y\}$$ and $$U_2 = \{v_k \in V\setminus I_{x,y}: v_0v_k \textnormal{ is disjoint to }v_xv_y\}.$$

\noindent We now make the following observation.

\begin{observation}\label{obs1}
    If $v_iv_j \in E_4$, then $v_iv_j$ has one endpoint in $U_1$ and the other in $U_2$.
\end{observation}

\begin{proof}
    For sake of contradiction, suppose that $v_i,v_j \in U_2$ such that $i < j$. Since $v_iv_j$ crosses $v_xv_y$, and since $G$ is simple, $v_iv_j$ must cross either $v_0v_x$ or $v_0v_y$, but not both.  Without loss of generality, we can assume that $v_iv_j$ crosses $v_0v_y$.  Since $v_iv_j \in E_4$, we have that $v_i,v_j \not\in I_{x,y}$.  Hence, if we have $i < j < x$ or $y < i < j$, then the set $T_{i,j}$ contains $v_y$ but not $v_x$.  Therefore, $T_{i,j}$ stabs $\{v_i,v_j\}$, and we have $v_iv_j \not\in E_4$ which is a contradiction.  See Figure \ref{fignote42}.  If we have $i < x < y < j$, then the set $T_{i,j}$ contains $v_x$ but not $v_y$. See Figure \ref{fignote4}.  Hence, $T_{i,j}$ stabs $\{v_x,v_y\}$ which implies that $v_iv_j \not\in E_4$, contradiction.  A symmetric argument follows if $v_i,v_j \in U_1$.  See Figures \ref{fignote42a} and \ref{fignote42b}.    \end{proof}

For sake of contradiction, let us suppose that $|E_4| >   (c_4/2)n^{7/4}$, where $c_4$ is a large constant that will be determined later.  Since $|U_2|\leq n$, the observation above implies that $|U_1| > (c_4/2)n^{3/4}.$  In what follows, we will find another matching $\{v_{x'},v_{y'}\} \in M_2$ such that $\phi_{M_2}(v_{x'},v_{y'
}) < \phi_{M_2}(v_{x},v_{y
})$, which will be our contradiction.

\begin{claim}
There is a subset $M_4\subset M_2$, where

    $$|M_4| \geq \frac{c_4n^{3/4}}{16},$$
    
\noindent  such that for each matching  $\{v_i,v_j\} \in M_4$, we have $v_i,v_j \in U_1$ and the topological edge $v_iv_j$ lies completely inside of triangle $G=[v_0,v_x,v_y]$.

\end{claim}

 \begin{proof}
Let $S_1,\ldots, S_4 \subset U_1$, such that $S_1 = U_1\cap X$, 

$$S_2 = \{v_i \in U_1: \{v_i,v_j\} \in M_1\setminus M_2\},$$

$$S_3 = \{v_i \in U_1: \{v_i,v_j\} \in M_2, v_j \in I_{x,y}\},$$

$$S_4 = \{v_i \in U_1: \{v_i,v_j\} \in M_2, v_j \in U_2\}.$$

Clearly we have $|S_1| \leq 2n^{3/4}$ since $|X| \leq 2n^{3/4}$.  Moreover, at most $2|M_1\setminus M_2| \leq 2c_3n^{3/4}$ vertices in $U_1$ are endpoints of a matching in $M_1$ but not in $M_2$.  Thus $|S_2| \leq  2c_3n^{3/4}$.  Since $|I_{x,y}| \leq n^{3/4}$, we have $|S_3| \leq n^{3/4}$.  Finally, if $v_i\in U_1$ and $v_j\in U_2$, the set $T_{x,y}$ stabs $\{v_i,v_j\}.$  By the third property of the matching $M_1$, we have $|S_4| \leq c_3n^{1/2}.$

Let $S_5 = U_1\setminus (S_1\cup\cdots \cup S_4)$.  By setting $c_4$ sufficiently large, we have

\begin{align*} 
|S_5| &\geq   |U_1| -  2n^{3/4} - 2c_3n^{3/4} - n^{3/4} - c_3n^{1/2} \\ 
  &\geq  (c_4/4)n^{3/4}. \\ 
\end{align*}

Thus, we have a perfect matching $M_3$ on $S_5$ such that $M_3\subset M_2$.  Finally, let us count the number of matchings $\{v_i,v_j\} \in M_3\subset M_1$ such that the topological edge $v_iv_j$ does lie completely inside triangle $G[v_0,v_x,v_y]$.  If $\{v_i,v_j\}\in M_3$ and $v_iv_j$ does not lie completely inside of $G[v_0,v_x,v_y]$, then $v_iv_j$ must cross exactly two edges of the triangle $G[v_0,v_x,v_y]$.  Moreover, the two edges cannot be $v_0v_x$ and $v_0v_y$.  See Figures \ref{figobs1} and \ref{figobs2}.  Hence, $v_iv_j$ must then cross $v_xv_y$.  As argued in Observation~\ref{obs1}, the set $T_{i,j}$ stabs $\{v_x,v_y\}$.  Since the in-degree of $\{v_x,v_y\}\in M_2$ is less than $n^{3/4}$ in the directed graph $\Gamma_1$, there are at most $n^{3/4}$ such matchings.  We delete these matchings, leaving us with a matching $M_4\subset M_3$, where

 \begin{figure}
  \centering
\subfigure[$x < y < i < j$.]{\label{figobs1}\includegraphics[width=0.25\textwidth]{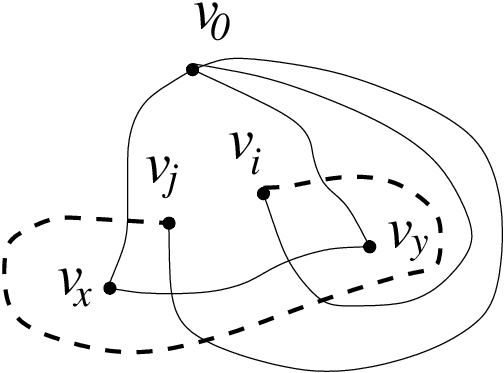}}  \hspace{2cm}
  \subfigure[$i < x < y < j$.]{\label{figobs2}\includegraphics[width=0.25\textwidth]{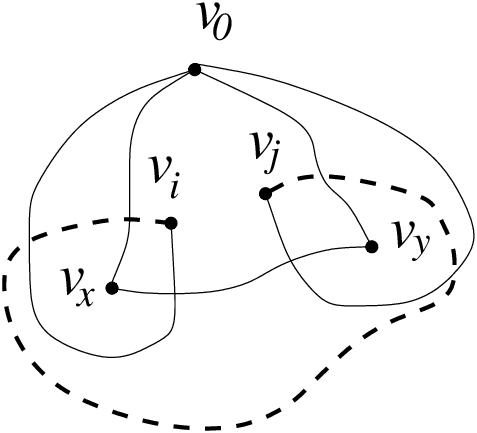}}
 \caption{Edge $v_iv_j$ violating the simple condition by crossing both $v_0v_x$ and $v_0v_y$.}\label{figdouble}
\end{figure}

\begin{align*} 
|M_4| &\geq \frac{|S_5|}{2} - n^{3/4} \\
    &\geq (c_4/8)n^{3/4} - n^{3/4}\\
    & \geq (c_4/16)n^{3/4},
\end{align*}

\noindent such that for each $\{v_i,v_j\}\in M_4$, $v_i,v_j \in U_1$, and the topological edge $v_iv_j$ lies completely inside of triangle $G[v_0,v_x,v_y]$.   
 \end{proof}

\noindent Fix a matching $\{v_i,v_j\} \in M_4$.  Then notice that for any vertex $v_k \in U_1$ such that $v_k \not\in I_{x,y}\cup I_{i,j}$, if $v_k$ lies inside of triangle $G[v_0,v_i,v_j]$, then $v_k$ lies inside of triangle $G[v_0,v_x,v_y]$. See Figure \ref{figkeep1}. We create a second auxiliary directed graph $\Gamma_4$, where $V(\Gamma_4) = M_4$, and we have a directed edge from $\{v_k,v_{\ell}\}$ to $\{v_i,v_{j}\}$ if and only if $v_k$ and $v_{\ell}$ lie inside of triangle $G[v_0,v_i,v_{j}]$.

\begin{observation}
    Let $\{v_i,v_j\}, \{v_k,v_{\ell}\} \in V(\Gamma_4)$. Suppose $v_i,v_j \not\in I_{k,\ell}$ and $v_k,v_{\ell} \not\in I_{i,j}$.  Then there is at most one directed edge between $\{v_i,v_j\}$ and $\{v_i,v_{\ell}\}$ in $\Gamma_4$.  
\end{observation}

\begin{proof}
    Without loss of generality, we can assume $i < j < k< \ell$.  For sake of contradiction, suppose we have two directed edges (both directions) between $\{v_i,v_j\}$ and $\{v_i,v_{\ell}\}$ in $\Gamma_4$.  Hence, $v_k,v_{\ell}$ both lie inside of triangle $G[v_0,v_i,v_j]$.  If the edge $v_kv_{\ell}$ lies inside of $G[v_0,v_i,v_j]$, then we only have one directed edge between between $\{v_i,v_j\}$ and $\{v_i,v_{\ell}\}$ in $\Gamma_4$, which is a contradiction.  Therefore, we can assume that $v_kv_{\ell}$ crosses $G[v_0,v_i,v_j]$. As argued in Observation \ref{obs1}, edge $v_kv_{\ell}$ must cross $v_iv_j$, and either $v_0v_i$ or $v_0v_j$ but not both.  However, this implies that the triangle $G[v_0,v_k,v_{\ell}]$ does not contain both $v_i$ and $v_j$, which is a contradiction.  See Figure \ref{fignote42b}.
\end{proof}

Let us remark that there can be pairs of vertices $\{v_i,v_j\}$ and $\{v_i,v_{\ell}\}$ in $\Gamma_4$ with two directed edges between them in $\Gamma_4$.  However, this would only happen if $v_i,v_j \in I_{k,\ell}$ or $v_k,v_{\ell} \in I_{i,j}$ by the observation above.   See Figure~\ref{figg42}.  Moreover, since $|I_{i,j}|,|I_{k,\ell}| \leq n^{3/4}$, the number of such pairs is at most $|V(\Gamma_4)|n^{3/4}$.  Since $|V(\Gamma_4)| = |M_4| \geq (c_4/16)n^{3/4}$, by setting $c_4$ sufficiently large, the total number of directed edges in $\Gamma_4$ is at most

 \begin{figure}
  \centering
    \subfigure[{$v_k$ inside $G[v_0,v_x,v_y]$ and $G[v_0,v_i,v_j]$.}]{\label{figkeep1}\includegraphics[width=0.23\textwidth]{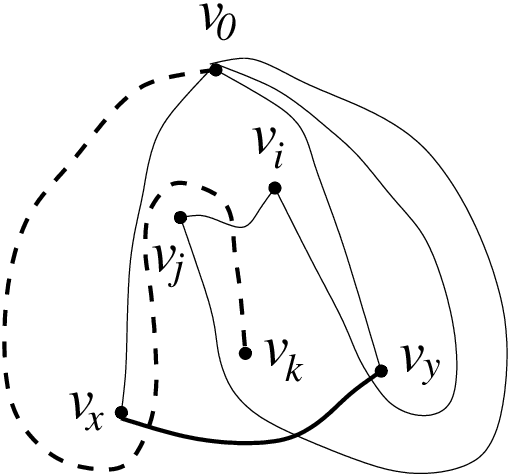}}\hspace{1cm}
\subfigure[$v_k,v_{\ell} \in I_{i,j}$]{\label{figg42}\includegraphics[width=0.25\textwidth]{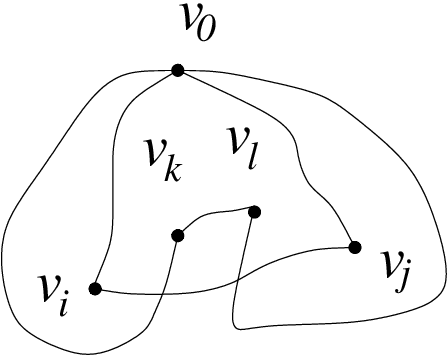}}\hspace{1cm}
  \subfigure[{Matching $\{v_i,v_j\}$ not in triangle $G[v_0,v_{x'},v_{y'}].$}]{\label{figexm1}\includegraphics[width=0.25\textwidth]{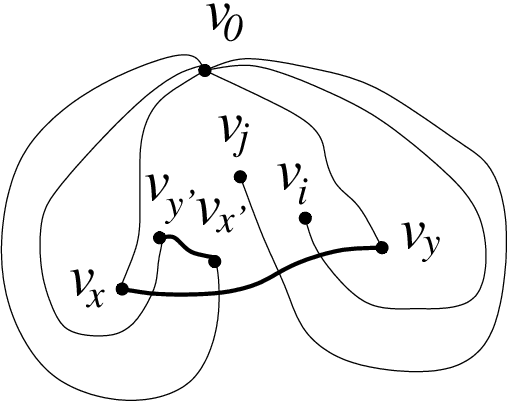}}
 \caption{Directed edges between $\{v_i,v_j\}$ and $\{v_{k},v_{\ell}\}$ in $\Gamma_4$.}\label{figg4}
\end{figure}

$$\binom{|V(\Gamma_4)|}{2} + |V(\Gamma_4)|n^{3/4} \leq \frac{3}{2}\binom{|V(\Gamma_4)|}{2}.$$

\noindent Hence, the sum of the in-degree of each vertex in $\Gamma_4$ is at most $\frac{3}{2}\binom{|V(\Gamma_4)|}{2}.$  By averaging, there is a vertex in $\Gamma_4$ whose in-degree is at most $(3/4)|V(\Gamma_4)|$.  Let us fix such a matching $\{v_{x'},v_{y'}\} \in V(\Gamma_4) = M_4$.  Hence, there are at least 

$$\frac{|V(\Gamma_4)|}{4} = \frac{|M_4|}{4} \geq \frac{c_4}{64}n^{3/4}$$ 

\noindent matchings in $M_4$, and thus, in $M_2$, that lies in the triangle $G[v_0,v_x,v_y]$, but not in $G[v_0,v_{x'},v_{y'}].$  See Figure \ref{figexm1}.  On the other hand, there are not too many matchings in $M_2$ that lies in triangle $G[v_0,v_{x'},v_{y'}]$, but not in triangle $G[v_0,v_x,v_y]$.  By the arguments above and Figure \ref{figkeep1}, each such matching must have an endpoint in $I_{x,y}\cup I_{x',y'}$.  Since $|I_{x,y}|,|I_{x',y'}|\leq n^{3/4}$, there are at most $2n^{3/4}$ such matchings in $M_2$.

Therefore, for $c_4$ sufficiently large, we have

\begin{align*} 
\phi_{M_2}(v_{x'},v_{y'}) &\leq  \phi_{M_2}(v_x,v_y)  - |M_4|/4 + 2n^{3/4}  \\
    &\leq  \phi_{M_2}(v_x,v_y)  - (c_4/64)n^{3/4} + 2n^{3/4}   \\
    &<  \phi_{M_2}(v_x,v_y).
\end{align*}

\noindent This contradicts the minimality of $\phi_{M_2}(v_x,v_y)$.  Hence

\begin{equation}\label{e4}
    |E_4| \leq (c_4/2)n^{3/4}.
\end{equation}

\noindent  Combining (\ref{e0}), (\ref{e1}), (\ref{e2}), (\ref{e3}), (\ref{e4}), we conclude that the number of edges in $G$ crossing the topological edge $v_xv_y$ is at most 

$$|E_0| + |E_1|  + |E_2| + |E_3| + |E_4| \leq n + 2n^{7/4} + n^{7/4} + c_3n^{7/4} + (c_4/2)n^{7/4} \leq  c_4n^{7/4},$$

\noindent for sufficiently large $c_4$.  This proves our claim, and completes the proof of Theorem \ref{main}.\end{proof}

\section{Concluding remarks}

Theorem \ref{main} answers one of the questions raised by the author and Zeng in \cite{zeng}.  It would be interesting if one could improve this bound further.

\begin{conjecture}
    For $n\geq 2$, we have $h(n) = \Theta(n^{3/2})$.  
\end{conjecture}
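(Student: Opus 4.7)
The lower bound $h(n) = \Omega(n^{3/2})$ is due to Kyn\v{c}l and Valtr, so resolving the conjecture reduces to proving the upper bound $h(n) = O(n^{3/2})$. The plan is to build directly on Section~\ref{p1}, where the exponent $7/4$ arose from a three-way balance among $|E_1| \leq |X| \cdot n$, $|E_2| \leq |I_{x,y}| \cdot n$, and the per-pair stabbing bound on $|E_3|$. With the parameters of Lemma~\ref{key} at $d = 2$, these three contributions have orders $n^{3/2+\alpha}$, $n^{2-\alpha}$, and $n^{2-\alpha}$ with $\alpha = 1/(2d)$, so the minimum is $n^{7/4}$, attained at $\alpha = 1/4$. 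To drive every term down to $n^{3/2}$, one would need an exceptional set of size $O(n^{1/2})$, per-pair interval length $O(n^{1/2})$, and per-pair stabbing at most $O(|\mathcal{F}|/n^{1/2})$, while retaining the Chazelle--Welzl per-set stabbing bound $O(n^{1/2})$ on the matching.

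The first step of the plan is therefore to strengthen Lemma~\ref{key} so that it simultaneously produces all four sharper estimates. The main obstacle is that the current proof requires $|U_i| > t$ at each inductive step, where $t$ is the number of parts of the refined partition $\mathcal{P}'$; this inequality forces $t \leq O(n^{1/2 + 1/(2d)})$ and consequently $|X| \leq 2n^{3/4}$. Shrinking both $|X|$ and the interval length to $O(n^{1/2})$ in a purely abstract set system of dual shatter exponent $2$ appears to be impossible, since Haussler's theorem is tight at this exponent. Progress therefore almost certainly requires abandoning the black-box VC machinery and exploiting a genuinely topological feature of $\mathcal{F} = \{T_{i,j}\}$: namely, that the triangles $T_{i,j}$ are arranged with a nesting structure inherited from the cyclic order of $v_1,\dots,v_n$ around $v_0$. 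A geometric cutting-style decomposition that respects this nesting should yield a sharper partition and a smaller effective dual shatter exponent than the abstract machinery provides, perhaps via a refined levels-in-arrangements estimate tailored to pseudotriangles emanating from a common apex.

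A complementary ingredient would be recursive refinement. Once the matched pair $\{v_x,v_y\}$ is chosen as in Section~\ref{p1}, the subgraph inside $G[v_0,v_x,v_y]$ is itself a complete simple topological graph on roughly $\phi_{M_2}(v_x,v_y)$ vertices, with three distinguished boundary vertices. Re-applying the short-edge machinery inside this sub-triangle --- essentially peeling off nested triangle layers --- should trade a factor of $n^{1/4}$ against an additive boundary error of $O(|I_{x,y}|\cdot n)$, provided one can establish a ``short inside, short outside'' transfer lemma relating the crossings of an interior edge with the interior drawing to its crossings in the ambient drawing. The principal technical hurdle is controlling crossings between an interior edge and the edges of the outer drawing that pierce $G[v_0,v_x,v_y]$; any such bound will require a careful charging of these pierce-events against the cell complex cut out by the triangle's three sides. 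In short, I expect the combinatorial/VC side of the argument to yield at most a modest further improvement, and the real breakthrough to come from injecting topological structure --- nesting, crossing-lemma-style averaging, or pseudotriangulation counting --- into the partition step that currently goes through Lemma~\ref{partition}.
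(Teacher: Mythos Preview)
The statement is a \emph{conjecture} in the paper's concluding remarks; the paper does not prove it and does not claim to. There is therefore no ``paper's own proof'' to compare against. Your proposal is not a proof either: it is a research outline that explicitly identifies obstacles (``appears to be impossible,'' ``almost certainly requires abandoning the black-box VC machinery,'' ``should yield,'' ``I expect'') without resolving any of them. You correctly diagnose why the argument of Section~\ref{p1} stalls at $n^{7/4}$ --- the three-way balance among $|X|$, the interval length, and the per-pair stabbing bound --- but the two mechanisms you suggest for breaking this balance (a topology-aware cutting that beats Haussler's bound for the specific system $\{T_{i,j}\}$, and a recursive ``peel inside the triangle'' scheme with a transfer lemma) are stated only as hopes, with the key technical steps left as open problems in their own right.

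To be concrete about the gaps: for the first route you would need to show that the triangle system $\mathcal{F}=\{T_{i,j}\}$ admits a partition into $O(n^{1/2})$ parts with per-pair stabbing $O(|\mathcal{F}|/n^{1/2})$, which is strictly stronger than what Lemma~\ref{partition} gives for an abstract system with $\pi^\ast(m)=O(m^2)$; you have not supplied any argument for why the common-apex structure buys this. For the second route, the ``short inside, short outside'' transfer lemma is exactly the hard part --- an edge that is short relative to the interior drawing can still be crossed by $\Theta(n^2)$ edges of the ambient drawing that pierce the bounding triangle, and your charging sketch does not explain how to control these. As written, the proposal is a reasonable list of directions to explore, but it does not constitute a proof of $h(n)=O(n^{3/2})$, and the conjecture remains open.
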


 It is not even known whether or not every complete $n$-vertex simple topological graph contains $\Omega(n)$ pairwise disjoint edges.  The best known bound is due to Aicholzer et al.~\cite{aich}, who showed that one can always find $\Omega(n^{1/2})$ pairwise disjoint edges in a complete $n$-vertex simple topological graph.  See also \cite{suk,fulek,ruiz} for slightly weaker polynomial bounds.
   A very strong conjecture due to Rafla \cite{raf} states that one can always find a noncrossing Hamiltonian cycle, which has been verified for $n \leq 9$ (see \cite{ab}).

 Unfortunately, the proofs in \cite{suk,fulek,ruiz,aich} breakdown when trying to find a noncrossing path rather than disjoint edges.   Recently, Aichholzer et al. \cite{aich}, and independently, the author and Zeng \cite{zeng} showed that one can always find a noncrossing path of length $\Omega(\frac{\log n}{\log\log n})$ in a complete $n$-vertex simple topological graph.   The following was conjectured by the author and Zeng.

\begin{conjecture}[\cite{zeng}]
    There is an absolute constant $\varepsilon > 0$, such that every complete $n$-vertex simple topological graph contains a noncrossing path on $n^{\varepsilon}$ vertices.
\end{conjecture}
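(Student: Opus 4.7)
My plan is to attack this conjecture through a divide-and-conquer scheme anchored by Theorem~\ref{main}. The key observation is that a short edge $v_xv_y$ guaranteed by Theorem~\ref{main}, together with the boundary vertex $v_0$ produced at the start of the proof of Theorem~\ref{main}, yields a non-self-intersecting triangle $G[v_0, v_x, v_y]$ which partitions $V \setminus \{v_x,v_y\}$ into an inside set $A$ and an outside set $B$. The induced subgraphs on $A \cup \{v_x, v_y\}$ and $B \cup \{v_x, v_y\}$ are themselves complete simple topological graphs (after re-embedding so that a boundary vertex sits on the outer face, using the projection trick from the proof of Theorem~\ref{main}), so the set-up is ripe for recursion.

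The outline would be: (1) strengthen Theorem~\ref{main} to extract a short edge $v_xv_y$ for which the split $(|A|,|B|)$ is polynomially balanced; (2) recursively produce noncrossing paths $P_A$ and $P_B$ in the two subgraphs, each terminating at one of $v_x,v_y$; (3) concatenate $P_A$ and $P_B$ via the edge $v_xv_y$, which crosses no edge lying entirely inside or entirely outside the triangle and is therefore compatible with both $P_A$ and $P_B$. If the balanced short edge exists and endpoint-constrained paths have length $f(m) \geq m^\varepsilon$ on $m$-vertex graphs, a recurrence of the form $f(n) \geq 2 f(\Omega(n^\alpha)) + 1$ iterates to $f(n) = n^{\Omega(\varepsilon \alpha)}$.

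The main obstacle I foresee is step (2): the inductive hypothesis must be strengthened to guarantee noncrossing paths with prescribed endpoint(s), which is a significantly stronger conclusion than merely finding a long noncrossing path. A natural relaxation would be to allow the recursive path to end anywhere in a large \emph{candidate set} near $v_x$ (for instance, vertices $v_k$ for which the edge $v_xv_k$ is itself short inside the recursive subgraph), and to splice through such a $v_k$ rather than $v_x$ literally. Showing that this candidate set remains large throughout the recursion will likely require a refined matching theorem in the spirit of Lemma~\ref{key}, one that controls not only the stabbing number of individual matched pairs but also their incidence to a distinguished vertex.

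If the endpoint obstruction proves fatal, an alternative route is to work directly with a matching $M$ produced by Lemma~\ref{key} applied to the set system $\mathcal{F}$ of Section~\ref{p1}, and to seek a long \emph{nested chain} of matched pairs, viewed as chords of the cyclic ordering $v_1,\dots,v_n$. A nested chain $\{v_{i_1},v_{j_1}\},\{v_{i_2},v_{j_2}\},\dots$ with $i_1 < i_2 < \cdots < j_2 < j_1$ should, by virtue of the stabbing bounds in Lemma~\ref{key}, correspond to a noncrossing path of comparable length, and guaranteeing such a chain of length $n^\varepsilon$ reduces to an Erd\H{o}s--Szekeres-type argument on the pairs, where the VC-dimension machinery of Section~\ref{p1} seems well-suited to controlling the size of ``interleaving'' antichains.
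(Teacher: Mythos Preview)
The statement you are attempting to prove is listed in the paper as an open \emph{conjecture} (in the concluding remarks), not as a theorem; the paper offers no proof and cites only the $\Omega(\log n/\log\log n)$ bound from \cite{aich,zeng}. There is therefore nothing to compare your proposal against, and any complete argument here would constitute a new result rather than a reconstruction.

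As for the proposal itself, beyond the endpoint obstacle you already flag, there is a more basic gap in step~(3). The triangle $G[v_0,v_x,v_y]$ separates the \emph{vertices} into an inside set $A$ and an outside set $B$, but it does not separate the \emph{edges}: an edge with both endpoints in $A$ can leave the triangle (crossing two of its sides) and intersect an edge with both endpoints in $B$. Hence even if $P_A$ is noncrossing inside $G[A\cup\{v_x,v_y\}]$ and $P_B$ is noncrossing inside $G[B\cup\{v_x,v_y\}]$, their concatenation need not be noncrossing in $G$. Your compatibility claim for $v_xv_y$ addresses only crossings \emph{with} $v_xv_y$, not crossings between $P_A$ and $P_B$. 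The ``short edge'' bound of $O(n^{7/4})$ from Theorem~\ref{main} does not help here either, since it bounds crossings with $v_xv_y$, not crossings among edges spanned by $A$ and by $B$. The alternative nested-chain route is too vague to assess: nesting in the cyclic order $v_1,\dots,v_n$ is a combinatorial condition that does not by itself control crossings in the topological drawing.
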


\end{document}